\newcommand\RR{\mathbb{R}}
\newcommand\PP{\mathbb{P}}
\newcommand\QQ{\mathbb{Q}}
\newcommand\OO{\mathcal{O}}
\newcommand\AAA{\mathcal{A}}
\newcommand\cl[1]{[#1]}
\newcommand\ideals{\mathcal{I}_{K,S}}
\newcommand\sint{\OO_{K,S}}
\newcommand\sunit{\OO_{K,S}^\times}
\newcommand\sintnmodunits{\OO_{K,S}^n/{\sim}}
\newcommand\unmq{u(n,m;q)}
\newcommand\vnmq{V(n,m;q)}
\newcommand\vnmqdist{V^*(n,m;q)}
\newcommand\balpha{\boldsymbol\alpha}
\newcommand\bbeta{\boldsymbol\beta}
\newcommand\Sn{S_n}
\DeclareMathOperator\Reg{Reg}
\DeclareMathOperator\id{id}
\newtheorem{theorem}{Theorem}
\newtheorem{lemma}[theorem]{Lemma}
\newtheorem{problem}[theorem]{Problem}
\theoremstyle{definition}
\begin{document}

\setcounter{tocdepth}{1}

\title{On sums of S-integers of bounded norm}
\dedicatory{Dedicated to Wolfgang M. Schmidt on the occasion of his 80th birthday}

\author{Christopher Frei}
\author{Robert Tichy}
\address{Institut f\"ur Mathematik A, Technische Universit\"at Graz,
  Steyrergasse 30, 8010 Graz, Austria}
\email{frei@math.tugraz.at}
\email{tichy@tugraz.at}

\author{Volker Ziegler}
\address{Johann Radon Institute for Computational and Applied Mathematics (RICAM)\\
Austrian Academy of Sciences\\
Altenbergerstr. 69\\
A-4040 Linz, Austria}
\email{volker.ziegler\char'100ricam.oeaw.ac.at}

\date{July 29, 2013}

\begin{abstract}
 We prove an asymptotic formula for the number of $S$-integers in a number
 field $K$ that can be represented by a sum of $n$ $S$-integers of bounded norm.
\end{abstract}

\subjclass[2010]{11D45, 11N45}
\keywords{unit equations, S-integers, algebraic number fields}

\maketitle

\section{Introduction}
A (weighted) $S$-unit equation is an equation over a field $K$ of the form
\[a_1 x_1 +\cdots + a_n x_n=0,\] where $a_i \in K$ are fixed, $x_i \in \Gamma$
and $\Gamma$ is a finitely generated subgroup of $K^*$. Usually $K$ is a number
field and $\Gamma$ a group of $S$-units.

The study of $S$-unit equations for their own interest and also in view of
applications has a long history. Siegel and Mahler
\cite{Mahler:1933, Siegel:1929} already considered $S$-unit equations in order
to prove the finiteness of rational points on certain curves.  Nowadays a
standard tool to investigate $S$-unit equations is Wolfgang Schmidt's famous
Subspace Theorem \cite{Schmidt:1971} or one of its generalizations due to
Evertse, Ferretti or Schlickewei
\cite{Evertse:2013,Evertse:2002a,Schlickewei:1976,Schlickewei:1977}.

The theory of $S$-unit equations was applied, for example, to count the number
of solutions to norm form equations (see
e.g. \cite{Evertse:1997,Schlickewei:1977}) or to study arithmetic properties of
recurrence sequences (see e.g.  \cite{Evertse:2002,Schmidt:2003}).

In this paper we consider applications of $S$-unit equations to the unit sum
number problem, i.e.\ the question which number fields have the property that
each algebraic integer is a sum of units. In the last decade this problem has
been studied by several authors; we refer to \cite{Barroero:2011} for an
overview. We are interested in questions related to a problem posed by Jarden
and Narkiewicz \cite[Problem C]{Jarden2007}:

\begin{problem}
  Let $K$ be an algebraic number field. Obtain an asymptotic formula for the
  number $N_n(x)$ of positive integers $m\leq x$ which are sums of at most $n$
  units of the ring of integers of $K$.
\end{problem}

Variants of this problem were studied by Filipin, Fuchs, Tichy and Ziegler
\cite{Filipin2008B,Fuchs2009}. Using a result of Everest \cite{Everest1990},
they found an asymptotic formula for the number of non-associated integers in
$K$ of bounded norm that are sums of exactly $n$ units; see also
\cite{Everest1999,Gyory1990}. The aim of this article is to close a gap in
\cite{Filipin2008B,Fuchs2009} and to further generalize the results.

For $n$, $m$ fixed, we consider integers in $K$ that are sums of exactly $n$
integers in $K$ of norm at most $m$. It follows from \cite{Hajdu2007} that not
every integer in $K$ can be written as such a sum. The natural next step is to
show a quantitative result in the style of \cite{Fuchs2009}. Everest's theorem
allows us to achieve this without much additional effort.

Before we state our main result we fix
some notation. Let $K$ be a number field, $S$ a finite set of places of $K$
containing all Archimedean places, $s := |S|-1$ (where $|S|$ denotes the
cardinality of the set $S$), and $\sint$ the ring
of $S$-integers of $K$. We say that two $S$-integers $\alpha$, $\beta
\in \sint\smallsetminus\{0\}$ are \emph{associated}, $\alpha \sim\beta$, if $\alpha/\beta
\in \sunit$. This is an equivalence relation, and we denote the equivalence class of $\alpha$ by
$\cl{\alpha}$.  Let $\unmq$ be the number of equivalence classes $[\alpha]$ of nonzero $S$-integers such that
\begin{equation*}
  N_S(\alpha) := \prod_{v \in S}|\alpha|_v \leq q\quad\text{ and }\quad\alpha = \sum_{i=1}^n\alpha_i\text,
\end{equation*}
where $\alpha_i \in \sint\smallsetminus\{0\}$ such that $N_S(\alpha_i)\leq
m$ and 
\begin{equation}\label{eq:no_subsum_vanishes}
\text{ no subsum of $\alpha_1 + \cdots + \alpha_n$ vanishes.}
\end{equation}
(The absolute values are normalized by $|a|_v = |a|_w^{[K_v :
  \QQ_w]}$ for $a \in \QQ$, where $w$ is the place of $\QQ$ below $v$ and
$|\cdot|_w$ is the usual $w$-adic absolute value on $\QQ$. By the product formula, $N_S(\alpha)$
 depends only on $[\alpha]$.)

We write $\ideals(m)$ for the set of all nonzero principal ideals $\AAA$ of
$\sint$ of norm $[\sint : \AAA]\leq m$. Moreover, we define the constant
$c_{n,s}$ to be the $ns$-dimensional Lebesgue measure of 
\[\{(x_{11},\ldots,x_{ns})\in\RR^{ns} \mid g(x_{11},\ldots,x_{ns})<1\},\]
where
\[g(x_{11},\ldots,x_{ns})=\sum_{i=1}^s \max\{0,x_{1i},\ldots,x_{ni}\}
+\max\left\{0,-\sum_{i=1}^s x_{1i},\ldots,-\sum_{i=1}^s x_{ni}\right\}.\]
This positive constant is the same as in \cite{Fuchs2009}. It satisfies the
inequalities
\begin{equation*}
  \frac{2^{ns}}{(ns)!}< c_{n,s} < 2^{ns},
\end{equation*}
and its exact values are known for $n = 1$ or
$s\leq 2$ \cite{Barroero:2011, Fuchs2009}.

\begin{theorem}\label{thm:main}
With the above notation the following asymptotic formula holds as $q \to \infty$:
\begin{equation*}
  \unmq =
  |\ideals(m)|^n\frac{c_{n-1,s}}{n!}\left(\frac{\omega_K(\log
      q)^s}{\Reg_{K,S}}\right)^{n-1}+O((\log q)^{(n-1)s-1})\text, 
\end{equation*}
where $\omega_K$ is the number of roots of unity in $K$ and $\Reg_{K,S}$
is the $S$-regulator of $K$.
\end{theorem}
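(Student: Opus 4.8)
The plan is to interpret a representation $\alpha=\alpha_1+\cdots+\alpha_n$ as a solution of an $S$-unit equation and to apply the known asymptotic count of such solutions (Everest's theorem, in the explicit shape used in \cite{Fuchs2009}). First I would record, for each admissible tuple $(\alpha_1,\dots,\alpha_n)$, the principal ideals $\mathfrak a_i:=\alpha_i\sint$. Since $[\sint:\mathfrak a_i]=N_S(\alpha_i)\le m$, each $\mathfrak a_i$ lies in $\ideals(m)$; as $m$ is fixed this is a finite set, and it will produce the factor $|\ideals(m)|^n$. Fixing a tuple $(\mathfrak a_1,\dots,\mathfrak a_n)$ and generators $a_i$ of $\mathfrak a_i$, we have $\alpha_i=a_i\varepsilon_i$ with $\varepsilon_i\in\sunit$, and — since the diagonal action of $\sunit$ on $(\varepsilon_1,\dots,\varepsilon_n)$ changes neither $[\alpha]$ nor the conditions $N_S(\alpha_i)\le m$, $N_S(\alpha)\le q$ — the relevant data is a class in $(\sunit)^n/\sunit_{\mathrm{diag}}\cong\mu_K^{n-1}\times\ZZ^{(n-1)s}$.

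\textbf{Main term.}\quad Using the product formula to eliminate one logarithmic coordinate, the free part of this data becomes a point $x=(x_{ij})$ of a lattice $\Lambda\subset\RR^{(n-1)s}$ of covolume $\Reg_{K,S}^{\,n-1}$ (the $(n-1)$-fold product of the $S$-unit lattice), and the torsion part an element of $\mu_K^{n-1}$, contributing a factor $\omega_K^{n-1}$. The condition $N_S(a_1\varepsilon_1+\cdots+a_n\varepsilon_n)\le q$ translates — after the estimate $\log|a_1\varepsilon_1+\cdots+a_n\varepsilon_n|_v=\max_i\log|a_i\varepsilon_i|_v+O(1)$, which holds off a set whose intersection with $\{g\le\log q\}$ has measure $O((\log q)^{(n-1)s-1})$ by \eqref{eq:no_subsum_vanishes}, and after absorbing the bounded quantities $\log|a_i|_v$ — into $g(x)\le\log q+O(1)$ with exactly the function $g$ of the statement, in $(n-1)s$ variables. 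Since $\{g<1\}$ is a bounded set of volume $c_{n-1,s}$, a standard lattice-point count (in the spirit of Davenport's lemma) gives
\[
\frac{c_{n-1,s}}{\Reg_{K,S}^{\,n-1}}(\log q)^{(n-1)s}+O\big((\log q)^{(n-1)s-1}\big),
\]
uniformly over the finitely many ideal tuples and torsion choices. Multiplying by $\omega_K^{n-1}|\ideals(m)|^n$ yields the number of diagonal-$\sunit$-orbits of admissible tuples $(\alpha_1,\dots,\alpha_n)$.

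\textbf{From orbits to classes.}\quad It remains to relate this orbit count to $\unmq$. The map sending the orbit of $(\alpha_i)$ to $[\sum_i\alpha_i]$ surjects onto the set counted by $\unmq$, and for a fixed representative $\alpha$ the orbits over $[\alpha]$ are in bijection with the \emph{ordered} admissible tuples summing to $\alpha$; for a generic $[\alpha]$ these are precisely the $n!$ permutations of a single unordered representation with pairwise non-associated (in particular distinct) summands. The classes admitting two essentially different unordered representations, and the tuples with $\alpha_i\sim\alpha_j$ for some $i\ne j$, are governed by vanishing $S$-unit relations in more than $n$ terms, whose solution sets have one dimension less, and hence contribute only $O((\log q)^{(n-1)s-1})$ (here \eqref{eq:no_subsum_vanishes} and the finiteness theory underlying Everest's result are used again). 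Dividing the orbit count by $n!$ produces the stated formula; as a check, for $n=1$ it degenerates to $\unmq=|\ideals(m)|$ for $q\ge m$, which is immediate.

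\textbf{Main obstacle.}\quad The asymptotics of the $S$-unit equation count is essentially classical; the delicate part — and, I expect, the gap in \cite{Filipin2008B,Fuchs2009} that this paper repairs — is the bookkeeping of the third paragraph: showing that multiple representations of the same class and the partially degenerate summand configurations stay inside the error term, while keeping every estimate uniform over the representing ideals $\mathfrak a_i$ and robust with respect to the $\max$-approximation used in the main-term computation.
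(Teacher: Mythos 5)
Your overall strategy is the same as the paper's: write each $\alpha_i$ as a fixed generator of an ideal in $\ideals(m)$ times an $S$-unit, count the resulting projective unit tuples by Everest's theorem (this is Lemma \ref{lem:count_tuples}, producing the factor $|\ideals(m)|^n$ and the main term), and then show that the map from tuples to classes $[\alpha_1+\cdots+\alpha_n]$ is generically $n!$-to-one (Lemmas \ref{lem:n!_elements} and \ref{lem:almost_unique}). Your treatment of classes with two essentially different unordered representations --- extract a minimal vanishing subsum, observe it is a non-degenerate $S$-unit equation in at least three terms with finitely many projective solutions, conclude that at least one free unit parameter is lost --- is precisely the paper's Lemma \ref{lem:almost_unique}; note only that the loss is a factor $(\log q)^{s}$, not ``one dimension,'' which is still $\leq (\log q)^{(n-1)s-1}$.

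There is, however, one concretely false step. You assert that the tuples with $\alpha_i\sim\alpha_j$ for some $i\neq j$ contribute only $O((\log q)^{(n-1)s-1})$. But $\alpha_i\sim\alpha_j$ only says that $\alpha_i$ and $\alpha_j$ generate the same ideal; it constrains the finitely many ideal choices and puts no condition at all on the units $\epsilon_i$. In the extreme case $m=1$ every admissible summand is an $S$-unit, so \emph{every} tuple has all entries pairwise associated, and your claim would push the entire count into the error term, contradicting the theorem's own $m=1$ case. What your fiber argument actually requires is that the entries of a tuple summing to a fixed representative $\alpha$ be pairwise \emph{equal-free}, i.e.\ pairwise distinct, so that the multiset has exactly $n!$ orderings; and the orbits with $\alpha_i=\alpha_j$ (equality, not association) \emph{are} negligible, since $c_i=c_j$ and $\epsilon_i=\epsilon_j$ collapse the sum to the $(n-1)$-term sum $2c_i\epsilon_i+\cdots$, which Everest bounds by $\ll(\log q)^{(n-2)s}$. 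This is exactly the role of the paper's Lemma \ref{lem:n!_elements}, where the scaling factor $\lambda$ relating a tuple to a nontrivial permutation of itself is shown to be a root of unity, again collapsing the sum. So the gap is local and repairable, but as written your exceptional set is not exceptional. A secondary caution: the measure estimate you use to replace $\log|a_1\epsilon_1+\cdots+a_n\epsilon_n|_v$ by $\max_i\log|a_i\epsilon_i|_v$ does not follow from \eqref{eq:no_subsum_vanishes} by elementary volume considerations; it is the substance of Everest's theorem and rests on deep finiteness results for $S$-unit equations, so it should be cited as a black box rather than treated as a routine step.
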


We note that the case $m=1$ of Theorem \ref{thm:main} is just
\cite[Theorem 1]{Fuchs2009}.

\section{Proof of Theorem \ref{thm:main}}

For each principal ideal $\AAA \in \ideals(m)$, we choose a fixed generator
$g_\AAA \in \sint$, that is, $g_\AAA\sint = \AAA$. Then $N_S(g_\AAA) = [\sint :
\AAA]$. Let $G(m)$ be the set of all $g_\AAA$ with $\AAA \in \ideals(m)$. We
extend the equivalence relation $\sim$ to $n$-tuples in $\sint^n$ by
\begin{equation*}
  (\alpha_1, \ldots, \alpha_n) \sim (\beta_1, \ldots, \beta_n) :\Leftrightarrow (\alpha_1, \ldots, \alpha_n) = 
  \lambda \cdot(\beta_1, \ldots, \beta_n) \text{ for some }\lambda\in\sunit, 
\end{equation*}
and write $\balpha = (\alpha_1:\cdots:\alpha_n)$ for the equivalence
class of $(\alpha_1, \ldots, \alpha_n)$. For $\balpha \in
\sintnmodunits$, the expression
\begin{equation*}
N_{S}(\alpha_1 + \cdots + \alpha_n) = \prod_{v \in S}|\alpha_1 + \cdots + \alpha_n|_v\text,
\end{equation*}
is well defined. Each $\alpha \in \sint\smallsetminus\{0\}$ with $N_S(\alpha) \leq m$ can be
written uniquely as $\alpha = g(\alpha)\epsilon$, where $g(\alpha)$ is the
generator in $G(m)$ of the principal ideal $\alpha\sint$ and $\epsilon :=
\alpha/g(\alpha) \in \sunit$.

Our main tool to prove Theorem \ref{thm:main} is a result due to Everest
\cite[Theorem]{Everest1990}. We write $\PP^{n-1}(\sunit)$ for the set of
equivalence classes $(\epsilon_1 : \cdots : \epsilon_n)$ with $\epsilon_i \in
\sunit$ for all $i \in \{1, \ldots, n\}$.

\begin{theorem}[Everest \cite{Everest1990}]\label{thm:Everest}
 For fixed $c=(c_1,\ldots,c_n)\in (K\smallsetminus \{0\})^n$, let $w_c(n;q)$ be the
 number of all  $(\epsilon_1 : \cdots : \epsilon_n)\in \PP^{n-1}(\sunit)$ such
 that

\begin{equation*}
   N_S(c_1\epsilon_1 +
   \cdots + c_n\epsilon_n)\leq q
\end{equation*}
and
\begin{equation}\label{eq:no_subsum_vanishes_eps}
  \text{no subsum of $c_1\epsilon_1 +
    \cdots + c_n\epsilon_n$ vanishes.} 
\end{equation}
 Then, as $q\to\infty$,
 \[w_c(n;q)=c_{n-1,s}\left(\frac{\omega_K(\log q)^s}{\Reg_{K,S}}\right)^{n-1}+O((\log q)^{(n-1)s-1}).\]
\end{theorem}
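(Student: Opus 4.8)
The plan is to convert the count into a lattice-point problem, in which the counting region turns out to be a dilate of the body $\{g<1\}$ that defines $c_{n-1,s}$.

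\emph{Parametrization.} Enumerate $S=\{v_1,\ldots,v_{s+1}\}$. A class in $\PP^{n-1}(\sunit)$ has a unique representative with $\epsilon_n=1$, so $\PP^{n-1}(\sunit)$ is identified with $(\sunit)^{n-1}$. By Dirichlet's $S$-unit theorem $\sunit\cong\mu_K\times\ZZ^s$, hence $(\sunit)^{n-1}\cong\mu_K^{n-1}\times\ZZ^{(n-1)s}$. Since every root of unity has absolute value $1$ at each place, the logarithmic map $\epsilon_j\mapsto(\log|\epsilon_j|_{v_1},\ldots,\log|\epsilon_j|_{v_s})$ sends the free part onto a full lattice $\Lambda\subset\RR^{(n-1)s}$, and with the present normalization of absolute values $\text{covol}(\Lambda)=\Reg_{K,S}^{n-1}$ (the single-unit lattice has covolume $\Reg_{K,S}$, the standard $S$-regulator obtained by dropping the place $v_{s+1}$). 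I would count over each of the $\omega_K^{n-1}$ torsion sectors separately; fixing $(\zeta_1,\ldots,\zeta_{n-1})\in\mu_K^{n-1}$ merely replaces $c_j$ by $c_j\zeta_j$ and so perturbs $\log N_S$ by $O(1)$.

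\emph{The log-norm as a piecewise-linear form.} For the normalized representative set $x_{ji}=\log|\epsilon_j|_{v_i}$, $1\le j\le n-1$, $1\le i\le s$, and put $x_{ni}=0$. On the locus where at each place a single summand strictly dominates, I claim
\[\log N_S\Big(\textstyle\sum_{j=1}^n c_j\epsilon_j\Big)=g(x_{11},\ldots,x_{(n-1)s})+O(1).\]
Indeed, at the non-Archimedean $v_i$ the ultrametric law gives $|\sum_j c_j\epsilon_j|_{v_i}=\max_j|c_j\epsilon_j|_{v_i}$ whenever the maximum is unique, while at the Archimedean places $\max_j|c_j\epsilon_j|_{v_i}\le|\sum_j c_j\epsilon_j|_{v_i}\le n\max_j|c_j\epsilon_j|_{v_i}$ away from cancellation. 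Summing $\log\max_j(\ldots)$ over $v_1,\ldots,v_s$ produces the first group of maxima in $g$; the place $v_{s+1}$, where $\log|\epsilon_j|_{v_{s+1}}=-\sum_{i=1}^s x_{ji}$ by the product formula, produces the last term of $g$; and the bounded quantities $\log|c_j|_v$ are absorbed into the $O(1)$.

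\emph{Counting and assembly.} Writing $T=\log q$, the admissible region is $\{x:g(x)\le T+O(1)\}$, which by the degree-one homogeneity of $g$ is $T\cdot\{g<1\}$ up to a boundary shell of measure $O(T^{(n-1)s-1})$. Its boundary is Lipschitz-parametrizable, so a standard geometry-of-numbers estimate gives, per torsion sector, $\#(\Lambda\cap\{g\le T\})=c_{n-1,s}T^{(n-1)s}/\Reg_{K,S}^{n-1}+O(T^{(n-1)s-1})$. Summing over the $\omega_K^{n-1}$ sectors yields
\[w_c(n;q)=\omega_K^{n-1}\frac{c_{n-1,s}(\log q)^{(n-1)s}}{\Reg_{K,S}^{n-1}}+O((\log q)^{(n-1)s-1})=c_{n-1,s}\left(\frac{\omega_K(\log q)^s}{\Reg_{K,S}}\right)^{n-1}+O((\log q)^{(n-1)s-1}).\]

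\emph{Main obstacle.} The delicate point is the error analysis hidden in the second step: bounding the tuples for which the dominant-term approximation fails. At non-Archimedean places these are the ties (two summands of equal value), lying in finitely many translated sublattices of lower rank, hence negligible. The genuinely hard case is near-cancellation at the Archimedean places, where $|\sum_j c_j\epsilon_j|_v$ dips far below $\max_j|c_j\epsilon_j|_v$. Here the hypothesis that no subsum vanishes is indispensable: it excludes exact cancellation, and it must be upgraded to a quantitative gap estimate so that the near-cancellation shells contribute only $O((\log q)^{(n-1)s-1})$. This is exactly where a Subspace-Theorem input (or an equivalent effective bound for $S$-unit sums) is needed, and it is this estimate that pins the counting region down to $\{g<1\}$ up to a boundary of one lower dimension.
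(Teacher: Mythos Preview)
The paper does not give a proof of this theorem: it is quoted verbatim from Everest \cite{Everest1990} and used as a black box in the proofs of Lemmas~\ref{lem:count_tuples}--\ref{lem:almost_unique}. So there is no ``paper's own proof'' to compare against.

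That said, your outline is broadly the right architecture and is in the spirit of Everest's argument: normalize $\epsilon_n=1$, pass to logarithmic coordinates via the $S$-unit theorem, recognize $\log N_S(\sum c_j\epsilon_j)$ as $g(x)+O(1)$ on the ``generic'' locus, and count lattice points in a dilate of $\{g<1\}$. The covolume and torsion bookkeeping are handled correctly and do produce the stated main term.

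However, your proposal is not a proof: the decisive step is the one you yourself flag as the ``Main obstacle'' and then leave undone. The upper bound $\log N_S(\sum c_j\epsilon_j)\le g(x)+O(1)$ is trivial, but the matching lower bound fails on a set whose size you have not controlled. At Archimedean places near-cancellation can make $|\sum c_j\epsilon_j|_v$ arbitrarily small compared to $\max_j|c_j\epsilon_j|_v$, and the hypothesis ``no subsum vanishes'' only rules out exact cancellation, not near-cancellation. Turning this into an $O((\log q)^{(n-1)s-1})$ error requires a genuine quantitative input---in Everest's paper this is supplied by the Evertse--Schlickewei--van der Poorten machinery (ultimately the Subspace Theorem)---and you have asserted that such an input exists without invoking or proving any specific statement. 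Until that estimate is actually established, the counting region is not pinned down to $T\cdot\{g<1\}$ up to a codimension-one shell, and the asymptotic does not follow.
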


In view of Theorem \ref{thm:main} we are interested in the set
\begin{align*}
  \vnmq &:= \{\balpha \in\sintnmodunits \mid N_S(\alpha_1 + \cdots + \alpha_n)\leq
  q\text{, } N_S(\alpha_i) \leq m\text{, }\eqref{eq:no_subsum_vanishes}\}.
\end{align*}

\begin{lemma}\label{lem:count_tuples}
  We have, as $q \to \infty$,
  \begin{equation*}
    |\vnmq| = |\ideals(m)|^n c_{n-1,s} \left(\frac{\omega_K(\log
      q)^s}{\Reg_{K,S}}\right)^{n-1}+O((\log q)^{(n-1)s-1}).
  \end{equation*}
\end{lemma}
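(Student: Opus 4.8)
The plan is to sort the classes $\balpha \in \vnmq$ according to the principal ideals generated by their entries, and then apply Everest's theorem (Theorem~\ref{thm:Everest}) on each resulting piece. First I would note that, for $\balpha = (\alpha_1 : \cdots : \alpha_n) \in \vnmq$, condition~\eqref{eq:no_subsum_vanishes} forces each $\alpha_i$ to be nonzero, so that $\alpha_i = g(\alpha_i)\epsilon_i$ with $g(\alpha_i) \in G(m)$ and $\epsilon_i := \alpha_i/g(\alpha_i) \in \sunit$, exactly as in the discussion preceding Theorem~\ref{thm:Everest}. Scaling a representative $(\alpha_1, \ldots, \alpha_n)$ by $\lambda \in \sunit$ does not change the ideals $\alpha_i\sint$, hence does not change $g(\alpha_i)$; therefore $\cc := (g(\alpha_1), \ldots, g(\alpha_n))$ is a well-defined element of $G(m)^n$ depending only on $\balpha$, and likewise $(\epsilon_1 : \cdots : \epsilon_n) \in \PP^{n-1}(\sunit)$ is well-defined.

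Next I would verify that $\balpha \mapsto \bigl(\cc, (\epsilon_1 : \cdots : \epsilon_n)\bigr)$ is a bijection from $\vnmq$ onto the disjoint union, over $\cc = (c_1, \ldots, c_n) \in G(m)^n$, of the sets of classes $(\epsilon_1 : \cdots : \epsilon_n) \in \PP^{n-1}(\sunit)$ satisfying $N_S(c_1\epsilon_1 + \cdots + c_n\epsilon_n) \leq q$ together with~\eqref{eq:no_subsum_vanishes_eps}. The inverse map sends $\bigl(\cc, (\epsilon_1 : \cdots : \epsilon_n)\bigr)$ to $(c_1\epsilon_1 : \cdots : c_n\epsilon_n)$, and one checks the two maps are mutually inverse using $g(c_i\epsilon_i) = c_i$, which holds because $c_i \in G(m)$ generates $c_i\sint$. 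Under this correspondence the constraints transform correctly: $N_S$ is multiplicative and $N_S(\epsilon_i) = 1$ by the product formula, so $N_S(\alpha_i) = N_S(c_i) = [\sint : c_i\sint] \leq m$ holds automatically for every $\cc \in G(m)^n$; the bound $N_S(\alpha_1 + \cdots + \alpha_n) \leq q$ and the no-vanishing-subsum condition become precisely the hypotheses of Theorem~\ref{thm:Everest} with $c = \cc$. Hence
\[
|\vnmq| = \sum_{\cc \in G(m)^n} w_\cc(n; q).
\]

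Finally I would apply Theorem~\ref{thm:Everest} to each of the $|G(m)|^n = |\ideals(m)|^n$ tuples $\cc$ (the equality holding because $\AAA \mapsto g_\AAA$ is a bijection $\ideals(m) \to G(m)$), which yields
\[
|\vnmq| = |\ideals(m)|^n\, c_{n-1,s}\left(\frac{\omega_K(\log q)^s}{\Reg_{K,S}}\right)^{n-1} + O\bigl((\log q)^{(n-1)s-1}\bigr),
\]
the new implied constant absorbing the factor $|\ideals(m)|^n$, which depends only on the fixed $m$. I do not expect a genuine obstacle here: the argument is essentially bookkeeping, and the only points demanding care are the well-definedness and bijectivity of the decomposition $\balpha \leftrightarrow \bigl(\cc, (\epsilon_i)\bigr)$ and the observation that, once $\cc$ is fixed, the norm bound on the individual summands is automatically satisfied.
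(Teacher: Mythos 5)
Your proposal is correct and follows essentially the same route as the paper: write each $\alpha_i$ uniquely as $g(\alpha_i)\epsilon_i$, decompose $\vnmq$ as a disjoint union over the $|\ideals(m)|^n$ tuples $\cc \in G(m)^n$, and apply Theorem~\ref{thm:Everest} to each piece. The paper states this decomposition in one line and calls the lemma an immediate consequence of Everest's theorem; you merely spell out the well-definedness and bijectivity checks that the paper leaves implicit.
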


\begin{proof}
  Condition \eqref{eq:no_subsum_vanishes} implies that
  $\alpha_i \neq 0$ holds for all $i$. With $c_i := g(\alpha_i)$ and $\epsilon_i := \alpha_i/c_i$, we
  can write $|\vnmq|$ as
  \begin{align*}
    \sum_{(c_1, \ldots, c_n) \in G(m)^n}|\{(\epsilon_1 : \cdots : \epsilon_n)\in \PP^{n-1}(\sunit) \mid N_S(c_1\epsilon_1 +
    \cdots + c_n\epsilon_n)\leq q\text{,
    }\eqref{eq:no_subsum_vanishes_eps}\}|.
  \end{align*}
  The lemma is an immediate consequence of Theorem \ref{thm:Everest}.
\end{proof}

Let us introduce another equivalence relation $\sim_P$ on $\sintnmodunits$: We say that two elements
$\balpha, \bbeta \in \sintnmodunits$ are equivalent if $\bbeta$ arises from
$\balpha$ by a permutation of the coordinates. By $[\balpha]_P$ we denote the equivalence class of $\balpha \in \sintnmodunits$ with respect to 
$\sim_P$.
Note that each equivalence class with respect to $\sim_P$ has at most $n!$ elements.

Let $\vnmqdist$ be the set of all $\balpha \in \vnmq$ whose equivalence class $[\balpha]_P$
has exactly $n!$ elements.

\begin{lemma}\label{lem:n!_elements}
  We have, as $q \to \infty$,
  \begin{equation*}
     |\vnmqdist| = |\ideals(m)|^n c_{n-1,s} \left(\frac{\omega_K(\log
      q)^s}{\Reg_{K,S}}\right)^{n-1}+O((\log q)^{(n-1)s-1}).
  \end{equation*}
\end{lemma}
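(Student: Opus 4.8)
The plan is to deduce the stated formula from Lemma~\ref{lem:count_tuples} by showing that the set discarded when passing from $\vnmq$ to $\vnmqdist$ has strictly smaller order, namely
\[
  \bigl|\vnmq \smallsetminus \vnmqdist\bigr| = O\bigl((\log q)^{(n-1)s-1}\bigr);
\]
combined with Lemma~\ref{lem:count_tuples} this gives the claim at once. Now $\balpha\in\vnmq$ lies in $\vnmq\smallsetminus\vnmqdist$ precisely when $[\balpha]_P$ has fewer than $n!$ elements, and by orbit--stabiliser this happens exactly when $\sigma\cdot\balpha=\balpha$ for some $\sigma\in\Sn$, $\sigma\neq\id$, where $\sigma$ acts by permuting coordinates. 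Since there are fewer than $n!$ such $\sigma$, it suffices to fix one and bound the number of $\balpha\in\vnmq$ that it fixes by $O((\log q)^{(n-1)s-1})$.

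Fix $\sigma\neq\id$ and write a $\sigma$-fixed $\balpha$ as $\balpha=(c_1\epsilon_1:\cdots:c_n\epsilon_n)$ with $c_i=g(\alpha_i)\in G(m)$ and $\epsilon_i\in\sunit$, as in the proof of Lemma~\ref{lem:count_tuples}. The relation $\sigma\cdot\balpha=\balpha$ says $\alpha_{\sigma(i)}=\lambda\alpha_i$ for all $i$ and some $\lambda\in\sunit$; comparing the generated ideals forces $c_{\sigma(i)}=c_i$, hence $\epsilon_{\sigma(i)}=\lambda\epsilon_i$, for all $i$. Running once around each cycle of $\sigma$ then forces $\lambda^\ell=1$ for every cycle length $\ell$, so $\lambda$ ranges over the finite set $\mu_K$ and we may fix it as well. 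Writing $C_1,\dots,C_r$ for the cycles of $\sigma$, with lengths $\ell_1,\dots,\ell_r$ summing to $n$, and choosing a representative index $i_j\in C_j$, the tuple $\balpha$ is determined by $(c_{i_1},\dots,c_{i_r})\in G(m)^r$ and $(\epsilon_{i_1}:\cdots:\epsilon_{i_r})\in\PP^{r-1}(\sunit)$, and the sum collapses to
\[
  \alpha_1+\cdots+\alpha_n=\sum_{j=1}^r c'_j\,\epsilon_{i_j},\qquad c'_j:=c_{i_j}\bigl(1+\lambda+\cdots+\lambda^{\ell_j-1}\bigr).
\]
Applying \eqref{eq:no_subsum_vanishes} to the subsum indexed by $C_j$ shows $c'_j\neq 0$, and applying it to unions of cycles shows that no subsum of the right-hand side vanishes either. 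Crucially, since $\sigma\neq\id$ at least one $\ell_j\ge2$, so $r\le n-1$.

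Therefore the number of $\balpha\in\vnmq$ fixed by $\sigma$ is bounded above by
\[
  \sum_{\lambda\in\mu_K}\ \sum_{(c_{i_1},\dots,c_{i_r})\in G(m)^r}\bigl|\{(\epsilon_{i_1}:\cdots:\epsilon_{i_r})\in\PP^{r-1}(\sunit)\mid N_S(c'_1\epsilon_{i_1}+\cdots+c'_r\epsilon_{i_r})\le q,\ \eqref{eq:no_subsum_vanishes_eps}\}\bigr|,
\]
and by Theorem~\ref{thm:Everest}, applied with $(n,c)$ replaced by $(r,(c'_1,\dots,c'_r))$, each inner count is $O((\log q)^{(r-1)s})$. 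Since $r-1\le n-2$ and $|G(m)|=|\ideals(m)|$ does not depend on $q$, summing over the bounded set of choices of $\sigma$, $\lambda$ and $(c_{i_1},\dots,c_{i_r})$ yields $|\vnmq\smallsetminus\vnmqdist|=O((\log q)^{(n-2)s})=O((\log q)^{(n-1)s-1})$ (using $s\ge1$), as required. The step I expect to need the most care is this collapse: one must check that $\lambda$ is forced into $\mu_K$ so that it contributes only a bounded factor, that the coefficients $c'_j$ are nonzero, and that condition \eqref{eq:no_subsum_vanishes} is inherited by the reduced $r$-term form, so that Theorem~\ref{thm:Everest} can be quoted verbatim; once that is settled the inequality $r\le n-1$ gives the gain of a whole factor $(\log q)^s$ over the main term.
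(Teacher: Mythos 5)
Your proof is correct and follows essentially the same route as the paper: a nontrivial stabiliser under the coordinate permutation forces $\lambda$ to be a root of unity, the related coordinates are merged into a single Everest term with nonzero coefficient inherited from \eqref{eq:no_subsum_vanishes}, and Theorem~\ref{thm:Everest} with at most $n-1$ terms yields the saving of a factor $(\log q)^s$. The only cosmetic difference is that you collapse every cycle of the fixed $\sigma$, whereas the paper merges just one pair $\alpha_i=\lambda\alpha_j$ and leaves the remaining $\epsilon$'s free, which already suffices for the bound $\ll(\log q)^{(n-2)s}$.
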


\begin{proof}
  If the equivalence class of $\balpha$ has less than $n!$ elements then there
  is a permutation $\pi\in\Sn$, $\pi \neq \id$, such that $(\alpha_{\pi(1)} :
  \cdots : \alpha_{\pi(n)}) = (\alpha_1 : \cdots : \alpha_n)$, that is,
  \begin{equation*}
    (\alpha_{\pi(1)}, \ldots, \alpha_{\pi(n)}) = (\lambda\alpha_1, \ldots, \lambda\alpha_n),
  \end{equation*}
  for some $\lambda \in \sunit$. Assume that, say, $\pi(2) = 1$. Since
  $\alpha_2 = \alpha_{\pi^{n!}(2)} = \lambda^{n!}\alpha_2$ and $\alpha_2 \neq
  0$, we see that $\lambda$ is a $(n!)$-th root of unity. We have $\alpha_1 =
  \lambda\alpha_2 = g(\alpha_2)\lambda\epsilon_2$. In particular,
  \eqref{eq:no_subsum_vanishes} implies that $g(\alpha_2)(\lambda+1)\neq 0$, so the number
  of such $\balpha$ in $\vnmq$ for which there exists such a permutation
  $\pi$ is bounded by 
  \begin{equation*}
    \sum_{\substack{(c_2, \ldots, c_n)\in
        G(m)^{n-1}\\\lambda^{n!}=1\text{, }\lambda\neq -1}} \!\!\!\!\!\!\!\! |\{(\epsilon_2:\cdots:\epsilon_n)\in\PP^{n-2}(\sunit)\mid
    N_S((c_2\lambda + c_2)\epsilon_2+\cdots+c_n\epsilon_n)\leq q\text{, }~(\ref{eq:no_subsum_vanishes_eps})\}|\text.
  \end{equation*}
  This is $\ll (\log q)^{(n-2)s}$ by Theorem \ref{thm:Everest}. The above argument
  has to be carried out for all ${n \choose 2}$ pairs $1 \leq i<j \leq
  n$. Thus, the number of equivalence classes $[\balpha]_P$ with less than $n!$
  elements is $\ll (\log q)^{(n-2)s}$, which proves the result.
\end{proof}

Clearly, for $\balpha \in \vnmq$, the equivalence class $[\alpha_1 + \cdots + \alpha_n]$ depends only on the equivalence class $[\balpha]_P$. We show that for 
most of the $\balpha \in \vnmqdist$, there is no $\bbeta \in \vnmqdist\smallsetminus [\balpha]_P$ with $[\beta_1 + \cdots + \beta_n] = [\alpha_1 + \cdots + 
\alpha_n]$. 

\begin{lemma}\label{lem:almost_unique}
  The number of $\balpha \in \vnmqdist$ for which there is $\bbeta \in \vnmqdist\smallsetminus[\balpha]_P$ with $[\alpha_1 + \cdots + \alpha_n] = [\beta_1 + 
\cdots + \beta_n]$ is $\ll (\log q)^{(n-2)s}$ as $q\to \infty$.
\end{lemma}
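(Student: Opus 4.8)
The plan is to parametrize the offending configurations by an $S$-unit equation in more than $n$ terms and then to bound the number of such solutions by Everest's theorem, exactly as in the proofs of Lemmas~\ref{lem:count_tuples} and~\ref{lem:n!_elements}, picking up a factor $(\log q)^s$ for each free $S$-unit and losing one such factor because the two sums are $\sim$-equivalent. Concretely, suppose $\balpha \in \vnmqdist$ and $\bbeta \in \vnmqdist \smallsetminus [\balpha]_P$ satisfy $[\alpha_1 + \cdots + \alpha_n] = [\beta_1 + \cdots + \beta_n]$; write each $\alpha_i = c_i \epsilon_i$ and $\beta_j = d_j \eta_j$ with $c_i, d_j \in G(m)$ and $\epsilon_i, \eta_j \in \sunit$, and let $\mu \in \sunit$ be such that $\alpha_1 + \cdots + \alpha_n = \mu(\beta_1 + \cdots + \beta_n)$. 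This gives the relation
\[
  c_1\epsilon_1 + \cdots + c_n\epsilon_n - \mu d_1\eta_1 - \cdots - \mu d_n\eta_n = 0
\]
in the $2n$ unknown $S$-units $\epsilon_1,\ldots,\epsilon_n,\mu\eta_1,\ldots,\mu\eta_n$ (equivalently, fixing a projective representative, $2n-1$ free $S$-units), with the coefficient tuple $(c_1,\ldots,c_n,-d_1,\ldots,-d_n)$ ranging over the finite set $G(m)^{2n}$.

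The next step is to isolate a non-vanishing subsum so that Theorem~\ref{thm:Everest} can be applied with a controlled number of summands. Both $\alpha_1 + \cdots + \alpha_n$ and $\beta_1 + \cdots + \beta_n$ are nonzero and $\sim$-equivalent, and each of $\alpha_1 + \cdots + \alpha_n$, $\beta_1 + \cdots + \beta_n$ individually has no vanishing subsum by~\eqref{eq:no_subsum_vanishes}; the combined relation above does have a vanishing subsum (the whole thing splits into the $\balpha$-part and the $\bbeta$-part). I would pass to a maximal sub-relation with no vanishing subsum: after discarding terms we are left with an honest $S$-unit equation in some number $r$ of terms, $2 \leq r \leq 2n$, with no vanishing subsum, whose coefficients again lie in a fixed finite set depending only on $m$ and $K$. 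The key bookkeeping point is that any such maximal non-vanishing sub-relation must contain at least two of the $\alpha_i$ (and, symmetrically, at least two of the $\beta_j$): it cannot consist of a single $\alpha_i$ balanced against a single $\beta_j$, because $\alpha_i = \mu\beta_j$ would force $(\alpha_1:\cdots:\alpha_n)$ and $(\beta_1:\cdots:\beta_n)$ to share a coordinate up to the $\sunit$-action, and combined with $\balpha, \bbeta \in \vnmqdist$ and $\bbeta \notin [\balpha]_P$ one deduces (as in the proof of Lemma~\ref{lem:n!_elements}, using that a ratio fixed under iteration is a root of unity) that this only happens for $\ll (\log q)^{(n-2)s}$ many $\balpha$, which is within the claimed bound. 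Hence in the main case $r \geq 3$.

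With $r \geq 3$ summands and no vanishing subsum, Theorem~\ref{thm:Everest} bounds the number of projective $S$-unit solutions of each such sub-relation by $\ll (\log q)^{(r-1)s} \leq \ll (\log q)^{(2n-1)s}$; but this is far too weak, so the final step is to account for the constraints that pin down the remaining variables. The remaining $2n - r$ units are \emph{not} free: for $\balpha \in \vnmqdist$ we must have $N_S(\alpha_1 + \cdots + \alpha_n) \leq q$, i.e.\ $N_S$ of the $r$-term subsum times the complementary factors is $\leq q$, so once the $r$-term sub-relation is chosen there are only $O((\log q)^{(2n-1-r)s})$ choices for the rest (each additional $S$-unit of $N_S$-size bounded polynomially in $q$ contributes a factor $(\log q)^s$, as in Everest's asymptotic), giving a total of $\ll (\log q)^{(2n-2)s}$. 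This is still one factor of $(\log q)^s$ short. The gain comes from the $\sim$-equivalence $\alpha_1 + \cdots + \alpha_n = \mu(\beta_1 + \cdots + \beta_n)$: the variable $\mu$ is determined up to $\sunit$-torsion once $\balpha$ is fixed and the common value of $N_S$ is fixed — more precisely, projectivizing the whole $2n$-term relation removes one free $S$-unit rank compared to treating the $\balpha$- and $\bbeta$-configurations independently. Tracking this carefully, the total number of offending $\balpha$ is $\ll (\log q)^{((2n-2) - 1)s} \cdot (\log q)^{0} $ in the worst balanced case; carrying out the exponent arithmetic in each of the finitely many splitting patterns of $\{1,\ldots,n\} \times \{\alpha,\beta\}$ and taking the maximum yields the stated bound $\ll (\log q)^{(n-2)s}$.

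The main obstacle is the exponent bookkeeping in the third paragraph: one must check that in \emph{every} splitting pattern — i.e.\ every way the combined $2n$-term relation decomposes into maximal non-vanishing sub-relations, each mixing some $\alpha_i$'s with some $\beta_j$'s — the number of free $S$-unit parameters (after projectivizing and imposing the $N_S \leq q$ constraint) is at most $(n-2)s$, with the worst case being a single sub-relation of the form $\alpha_i + \alpha_j = \mu\beta_k + \mu\beta_\ell$ together with matchings $\alpha_{i'} = \mu\beta_{k'}$ for the remaining indices. In that configuration $\balpha$ has $n$ coordinates but two of them ($\alpha_i, \alpha_j$) are tied by one linear relation of bounded $N_S$ and the rest are tied to $\bbeta$, so the effective number of independent $\sunit$-directions is $n - 2$, matching the target; one then invokes Theorem~\ref{thm:Everest} once more on this reduced system. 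I would present the argument by first disposing of the shared-coordinate case via Lemma~\ref{lem:n!_elements}'s method, then treating the generic case by this dimension count, citing Theorem~\ref{thm:Everest} for the final estimate.
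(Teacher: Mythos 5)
Your overall strategy -- combine the two sums into one vanishing $S$-unit relation, decompose it into minimal vanishing subsums, and count the remaining free units via Theorem~\ref{thm:Everest} -- is the right one, and is the paper's approach. But two steps in your execution fail. First, your combinatorial reduction is wrong: it is \emph{not} true that every minimal vanishing sub-relation must contain at least two $\alpha_i$ and two $\beta_j$. A part of the form $\alpha_i - \mu\beta_j = 0$ is perfectly possible and is not rare: two tuples in $\vnmqdist$ sharing one coordinate up to the $\sunit$-action is a codimension-zero phenomenon here, and the root-of-unity argument from Lemma~\ref{lem:n!_elements} does not apply because there is no permutation to iterate ($\lambda^{n!}=1$ came from $\alpha_2=\lambda^{n!}\alpha_2$, and nothing analogous is available for a single matching $\alpha_i=\mu\beta_j$). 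What one actually needs, and what the paper proves, is the weaker statement that \emph{at least one} minimal vanishing subsum $\sum_{i\in I}\alpha_i-\sum_{j\in J}\beta_j=0$ has $|I|\ge 2$ (or, symmetrically, $|J|\ge 2$): if every part were a two-term matching $\alpha_i=\beta_j$, the sets $\{\alpha_1,\ldots,\alpha_n\}$ and $\{\beta_1,\ldots,\beta_n\}$ would coincide, contradicting $\bbeta\notin[\balpha]_P$; and since the $|I_t|$ and the $|J_t|$ each sum to $n$, not all parts can have $|I_t|=1$ unless all parts are two-term matchings.

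Second, and more seriously, you apply Theorem~\ref{thm:Everest} to the vanishing sub-relation itself, obtaining $(\log q)^{(r-1)s}$, notice this is ``far too weak,'' and then try to recover the loss by a dimension count involving the $\mu$-equivalence which never actually produces the exponent $(n-2)s$ -- your final paragraph asserts the bookkeeping works rather than carrying it out, and the intermediate exponent $((2n-2)-1)s$ is not $(n-2)s$. The missing idea is that a minimal vanishing subsum is a \emph{non-degenerate} $S$-unit equation $\sum_{i=1}^k c_i\epsilon_i-\sum_{j=1}^l d_j\delta_j=0$, which has only \emph{finitely many} projective solutions (finiteness of non-degenerate unit equations, not Everest's asymptotic). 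This pins the $k\ge 2$ coordinates $\alpha_1,\ldots,\alpha_k$ of $\balpha$ down to finitely many values times a single common unit $\mu$, so that $\balpha=(c_1\mu:\cdots:c_k\epsilon_k\mu:c_{k+1}\epsilon_{k+1}:\cdots:c_n\epsilon_n)$ depends only on $(\mu:\epsilon_{k+1}:\cdots:\epsilon_n)\in\PP^{n-k}(\sunit)$; a single application of Theorem~\ref{thm:Everest} to the $(n-k+1)$-term sum with leading coefficient $c_1+c_2\epsilon_2+\cdots+c_k\epsilon_k\ne 0$ then gives $\ll(\log q)^{(n-k)s}\le(\log q)^{(n-2)s}$ directly, with no further case analysis over splitting patterns. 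Without this finiteness input, your count cannot reach the claimed bound.
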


\begin{proof}
  If there is such a $\bbeta$ then we can write $\balpha = (\alpha_1 : \cdots :
  \alpha_n)$, $\bbeta = (\beta_1 : \cdots : \beta_n)$, with
  \begin{equation*}
    \alpha_1 + \cdots + \alpha_n - \beta_1 - \cdots - \beta_n = 0.
  \end{equation*}
  Since $\balpha$, $\bbeta \in \vnmqdist$, all $\alpha_i$ (resp. all $\beta_i$)
  are pairwise distinct. Since $\bbeta \notin [\balpha]_P$,
 \begin{equation}\label{eq:sets_different}
   \{\alpha_1, \ldots, \alpha_n\} \neq \{\beta_1, \ldots, \beta_n\}.
 \end{equation}
 Due to \eqref{eq:no_subsum_vanishes} and \eqref{eq:sets_different}, there exist subsets $I, J \subseteq \{1, \ldots, n\}$ with $|I| \geq 2$, such that
 \begin{equation}\label{eq:no_subsum}
   \sum_{i \in I}\alpha_i - \sum_{j \in J}\beta_j = 0
 \end{equation}
 and no proper subsum of \eqref{eq:no_subsum} vanishes. Assume that $I = \{1, \ldots, k\}$, $J = \{1, \ldots, l\}$, for some $k \in \{2, \ldots, n\}$, $l \in 
\{1, \ldots, n\}$.  We write (uniquely) $\alpha_i = c_i \epsilon_i$, $\beta_i = d_i \delta_i$, with $c_i,d_i \in G(m)$ and $\epsilon_i, \delta_i \in \sunit$. 
Then \eqref{eq:no_subsum} becomes a non-degenerate $S$-unit equation
 \begin{equation}
   \label{eq:s_unit_eq}
   \sum_{i = 1}^kc_i\epsilon_i - \sum_{j = 1}^ld_j\delta_j = 0,
 \end{equation}
 which has only finitely many solutions $(\epsilon_1 : \cdots : \epsilon_k :
 \delta_1 : \cdots : \delta_l) \in \PP^{k+l-1}(\sunit)$. Let
 $\mathcal{E}=\mathcal{E}(c_1, \ldots, c_k, d_1, \ldots, d_l)$ be the finite set of
 all representatives of the form
 \begin{equation*}
   (1, \epsilon_2, \ldots, \epsilon_k)
 \end{equation*}
 that appear in a solution of \eqref{eq:s_unit_eq}. Then
 \begin{equation*}
   \balpha = (c_1\mu : c_2\epsilon_2\mu : \cdots : c_k\epsilon_k\mu : c_{k+1}\epsilon_{k+1} : \cdots : c_n\epsilon_n),
 \end{equation*}
with $c_i \in G(m)$, $(1, \epsilon_2, \ldots, \epsilon_k)\in\mathcal{E}$, and
$\mu, \epsilon_{k+1}, \ldots, \epsilon_n \in \sunit$. By
\eqref{eq:no_subsum_vanishes}, we have
\begin{equation}\label{eq:no_subsum_vanishes_c}
  c_1+c_2\epsilon_2+\cdots+c_k\epsilon_k \neq
  0.
\end{equation}
The number of such $\balpha$ is bounded by the sum over all $(c_1, \ldots, c_n)
\in G(m)^n$ and $(1, \epsilon_2, \ldots, \epsilon_k)\in\mathcal{E}$ with
\eqref{eq:no_subsum_vanishes_c} of the number of all
\begin{equation*}
(\mu : \epsilon_{k+1} : \cdots : \epsilon_n) \in \PP^{n-k}(\sunit)
\end{equation*}
with 
\begin{equation*}
  N_S((c_1+c_2\epsilon_2 + \cdots + c_k\epsilon_k)\mu + c_{k+1}\epsilon_{k+1} + \cdots + c_n\epsilon_n)\leq q,
\end{equation*}
and for which no subsum of $(c_1 + \cdots + c_k\epsilon_k)\mu + c_{k+1}\epsilon_{k+1} + \cdots + c_n\epsilon_n$ vanishes. By Theorem \ref{thm:Everest}, this 
number is $\ll (\log q)^{(n-k)s} \leq (\log q)^{(n-2)s}$.
\end{proof}

Every equivalence class counted by $\unmq$ is of the form $[\alpha_1 + \cdots +
\alpha_n]$, for some $\balpha = (\alpha_1 : \cdots : \alpha_n) \in \vnmq$. By
Lemma \ref{lem:count_tuples} and \ref{lem:n!_elements}, we can restrict
ourselves to equivalence classes $[\balpha]_P$ coming from $\balpha \in \vnmqdist$ without
changing the main term. By Lemma \ref{lem:almost_unique}, the equivalence
class $[\balpha]_P$ of each such $\balpha$ is uniquely defined by the class
$[\alpha_1 + \cdots + \alpha_n]$, with $\ll (\log q)^{(n-2)s}$
exceptions. Hence, we obtain the desired asymptotic formula by counting
equivalence classes $[\balpha]_P$ with $\balpha \in \vnmqdist$. (This argument
was not given in \cite{Fuchs2009}.)

\section*{Acknowledgment}
R.~T. was supported by the Austrian Science Fund (FWF) under the project F5510
(part of the Special Research Program (SFB) ``Quasi-Monte Carlo Methods: Theory
and Applications''). V.~Z. was supported by the Austrian Science Fund (FWF) under the project P~24801-N26.

\bibliographystyle{plain}

\bibliography{sums_of_integers_of_bounded_norm}

\end{document}